\newtheorem{theorem}{Theorem}
\theoremstyle{plain}
\newtheorem{corollary}{Corollary}
\newtheorem{example}{Example}
\newtheorem{lemma}{Lemma}
\newtheorem{proposition}{Proposition}
\newtheorem{remark}{Remark}
\numberwithin{equation}{section}
 \numberwithin{equation}{section}
\begin{document}

\title[Norm inequalities related to operator monotone functions]{Norm inequalities related to operator monotone functions}
\author[A. G. Ghazanfari]{ A. G. Ghazanfari}

\address{Department of Mathematics, Lorestan University, P. O. Box 465, Khoramabad, Iran.}

\email{ghazanfari.a@lu.ac.ir}

\setcounter{page}{1}
% to be added afterwards
%\coordinates{2}{2010}{1}{00-00}% to be added afterwards

\subjclass[2010]{26D10, 47A63, 26D15.}% AMS Subject Classification (2010)
\keywords{Operator monotone function, Hermite-Hadamard inequality, unitarily invariant norm,
differentiable functions}

\begin{abstract}
Let $A$ be a positive definite operator on a Hilbert space $H$, and $|||.|||$ be a unitarily invariant norm on $B(H)$.
We show that if $f$ is an operator monotone function on $(0,\infty)$ and $n\in \mathbb{N}$, then $|||D^n f(A)|||\leq\|f^{(n)}(A)\|$
and $\|f^{(n)}(\cdot)\|$ is a quasi-convex function on the set of all positive definite operators in $B(H)$.
We establish some estimates of the right hand side of some Hermite-Hadamard type inequalities
in which differentiable functions are involved, and norms of the maps induced by them on the set of
self adjoint operators are convex, quasi-convex or $s$-convex.

As applications, we obtain some of bounds for $|||f(B)-f(A)|||$ in term of $|||B-A|||$. For instance,
Let $f,g$ be two operator monotone functions on $(0,\infty)$. Then, for every unitarily invariant norm $|||.|||$ and
every positive definite operators $A,B$,
\begin{align*}
&\left|\left|\left|f(A)g(A)-f(B)g(B)\right|\right|\right|\notag\\
&\leq|||B-A|||\Big[\max\left\{\|f'(A)\|,\|f'(B)\|\right\}\times\max\left\{\|g(A)\|,\|g(B)\|\right\}\notag\\
&+\max\left\{\|f(A)\|,\|f(B)\|\right\}\times \max\left\{\|g'(A)\|,\|g'(B)\|\right\}\Big].
\end{align*}
\end{abstract}

\maketitle

\section{Introduction and Preliminary}

We recall that the definitions of quasi-convex and $s$-convex functions generalize the definition of convex
function. More exactly, a function $f : [a, b] \rightarrow \mathbb{R} $ is said to be quasi-convex on $[a, b]$ if
for all $x,y\in [a,b]$ and $0\leq\lambda\leq1$,
\begin{align*}
f((1-\lambda)x+\lambda y)\leq \max\{f(x),f(y)\},
\end{align*}
and for a $s$ fixed in $(0,1]$, a function $f : (0,\infty)\rightarrow \mathbb{R}$ is said to be $s$-convex in the second sense \cite{hud}
if for all $x,y\in(0,\infty)$ and $0\leq\lambda\leq1$,
\begin{align*}
f((1-\lambda)x+\lambda y)\leq (1-\lambda)^sf(x)+\lambda^sf(y).
\end{align*}

Let $B(H)$ denote the set of all bounded linear operators on a complex Hilbert
space $H$.
An operator $A \in B(H)$ is positive definite (resp. positive semi-definite)
if $\langle Ax, x\rangle > 0$ (resp. $\langle Ax, x\rangle \geq 0 )$ holds for all non‐zero $x \in H$ . If $A$ is
positive semi‐definite, we denote $A\geq 0$.
To reach inequalities for bounded self-adjoint operators on Hilbert space, we shall use
the following monotonicity property for operator functions:\\
If $X\in B(H)$ is self adjoint with a spectrum $Sp(X)$, and $f,g$  are continuous real valued functions
on an interval containing $Sp(X)$, then
\begin{equation}\label{1.1}
f(t)\geq g(t),~t\in Sp(X)\Rightarrow ~f(X)\geq g(X).
\end{equation}
For more details about this property, the reader is referred to \cite{pec}.

 Suppose that $I$ be an interval in $\mathbb{R}$ and let
 $$\sigma(I)=\{A\in B(H) : \text{ $A$ is self adjoint and }
 Sp(A)\subseteq I\}.$$
 A real valued continuous function $f$
on an interval $I$ is said to be operator convex (operator concave)
if
\begin{align*}
f\left((1-\lambda)A+\lambda
B\right)\leq(\geq)(1-\lambda)f(A)+\lambda f(B)
\end{align*}
in the operator order in $B(H)$, for all $\lambda\in [0,1]$ and for
every bounded self adjoint operators $A$ and $B$ in $\sigma(I)$.\\
A real valued continuous function $f$
on an interval $I$ is said to be operator monotone if it is monotone with respect to
the operator order, i.e.,
\[
A\leq B \text{ implies } f(A)\leq f(B)
\]
for every bounded self adjoint operators $A$ and $B$ in $\sigma(I)$.\\
For some fundamental results on operator convex (operator concave)
and operator monotone functions, see \cite{bha, pec} and the
references therein.

Let $f$ be a real function on $(0,\infty)$, and let $f^{(n)}$ be its
$n$th derivative. Let $f$ also denote the map induced by $f$ on
positive semi-definite operators. Let $D^nf(A)$ be the $n$th order
Fr$\acute{e}$chet derivative of this map at the point $A$. For each
$A$, the derivative $D^nf(A)$ is a $n$-linear operator on the space
of all Hermitian operators. The norm of this operator is defined as
\begin{align*}
\left\|D^nf(A)\right\|=\sup\left\{\left\|D^nf(A)\left(B_1,...,B_n\right)\right\|:
\|B_1\|=...=\|B_n\|=1\right\}.
\end{align*}

Since $f^{(n)}(A)=D^nf(A)(1_H, 1_H,...,1_H)$, we have
$$\|f^{(n)}(A)\|=\|D^nf(A)(1_H, 1_H,...,1_H)\|\leq \|D^nf(A)\|.$$

Now, let $\mathcal{D}^{(n)}=\left\{f:\left\|D^nf(A)\right\|=\|f^{(n)}(A)\|~
\text{for all positive operator}~A\right\}$.

In \cite{bha3} it was shown that every operator monotone function is
in $\mathcal{D}^{(n)}$ for all $n = 1, 2, ...$. It was also shown in
\cite{bha2} that the functions $f(t) = t^n$, $n = 2, 3, ...,$ and
$f(t) = \exp (t)$ are also in $\mathcal{D}^{(1)}$. None of these are operator
monotone. In \cite{bha4} it was shown that the power function $f(t)
= t^p$ is in $\mathcal{D}^{(1)}$ if $p$ is in $(-\infty, 1]$ or in
$[2,\infty)$, but not if $p$ is in $(1,~\sqrt{2})$.

A norm $||| \cdot |||$ on $B(H)$ is called unitarily invariant
norm if $|||UAV ||| = |||A|||$ for all $A\in B(H)$ and all unitary operators $U, V \in B(H)$.
If $|||.|||$ is any unitarily invariant norm on Hermitian operators, then the corresponding norm
of the $n$-linear $D^nf(A)$ is defined as
\begin{align*}
|||D^nf(A)|||=\sup\left\{|||D^nf(A)\left(B_1,...,B_n\right)|||:
|||B_1|||=...=|||B_n|||=1\right\}.
\end{align*}
In this paper, we consider differentiable mappings which
  norm of the induced maps by them on the set of
self adjoint operators is convex, quasi-convex or $s$-convex.
We show that if $f$ is an operator monotone function on $(0,\infty)$, $A$ is a positive definite operator and $|||.|||$ a unitarily invariant norm, then
$|||D^n f(A)|||\leq\|f^{(n)}(A)\|$ for all positive integers $n$.
We also prove that $\|f^{(n)}(\cdot)\|$ is a quasi-convex function. Examples and applications for particular cases of
interest are also illustrated. Finally, two error estimates for the Simpson formulas are addressed.

%********************************************************************************************************************************************************
\section{Main results}
\subsection{Quasi-convexity and operator monotone functions}
In some problems of approximation
theory and perturbation theory, when we deal with norms of functions, it usually is not needed that the functions have
a special property such as convexity, monotony, quasi-convexity and $s$-convexity; just having the property
for norms of the functions would be sufficient. Let $f$ be a real function on an interval $I$ in $\mathbb{R}$, throughout this paper
the function $\|f(\cdot)\|$ is the norm of the induced map by $f$ on the set of
self adjoint operators as follows:
$$\|f(\cdot)\|: \sigma(I)\rightarrow [0,\infty); A\mapsto \|f(A)\|.$$
First, we give the following three results:

(1) Let $f$ be a positive real function on an interval $I\subseteq \mathbb{R}$. If $f$ is operator convex,
the relation $0\leq f(1-t)A+tB)\leq (1-t)f(A)+tf(B)$ implies that
$\|f(1-t)A+tB)\|\leq (1-t)\|f(A)\|+t\|f(B)\|$, i.e., $||f(\cdot)||$ is convex.

(2) Let $f$ be a positive real function on $I\subseteq [0,\infty)$. if $f$ is operator monotone, then
for every positive semi-definite operator $A\in \sigma(I)$,
\begin{align*}
f(\|A\|)&=f\left(\sup\{\lambda : \lambda\in \sigma(A)\}\right)\\
&=\sup\{f(\lambda) : \lambda\in \sigma(A)\}=\|f(A)\|.
\end{align*}
Therefore
\begin{align*}
\left\|f((1-t)A+tB)\right\|&=f(\|(1-t)A+tB\|)\leq f\left((1-t)\|A\|+t\|B\|\right)\\
&\leq f\left(\max\{\|A\|,\|B\|\}\right)=\max\{f(\|A\|),f(\|B\|)\}\\
&=\max\{\|f(A)\|,\|f(B)\|\},
\end{align*}
i.e., $||f(\cdot)||$ is quasi-convex.

(3) Let $f$ be an operator monotone function on $[0,\infty)$ and $|||\cdot|||$ a unitarily invariant norm on $B(H)$. Then
 $|||f(X)|||\leq f(\|X\|)|||1_H|||$ for all positive semi-definite operators $X$, since $X\leq \|X\|.1_H$. This implies that
\begin{align*}
|||f((1-t)A+tB)|||&\leq f(\|(1-t)A+tB\|)|||1_H|||\\
&\leq\max\{\|f(A)\|,\|f(B)\|\}|||1_H|||,
\end{align*}
Hence, we get the following result.

\begin{proposition}\label{p1}
\begin{enumerate}
\item[(i)]
Let $f$ be a positive real function on $I\subseteq \mathbb{R}$. If $f$ is operator convex Then
$\|f(\cdot)\|$ is convex on $\sigma(I)$.
\item[(ii)]
Let $f$ be a positive real function on $I\subseteq [0,\infty)$. If $f$ is operator monotone. Then
$\|f(\cdot)\|$ is quasi-convex on $\sigma(I)$.

\item[(iii)]
Let $f$ be an operator monotone function on $[0,\infty)$ such that $f(0)=0$. Then $\frac{1}{|||1_H|||}|||f(\cdot)|||$
is quasi-convex on $\sigma([0,\infty))$, for every unitarily invariant norm $|||\cdot|||$ on $B(H)$.
\end{enumerate}
\end{proposition}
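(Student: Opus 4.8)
The plan is to treat the three parts separately, each time reducing the operator assertion to a scalar inequality by means of an appropriate monotonicity of the norm on the positive cone.

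For (i), I would begin from operator convexity, $f((1-t)A+tB)\leq (1-t)f(A)+tf(B)$, and note that positivity of $f$ makes the left-hand side $\geq 0$. The tool I would invoke is that the usual operator norm is monotone on positive semi-definite operators, i.e. $0\leq X\leq Y$ forces $\|X\|\leq\|Y\|$. Applying it and then the triangle inequality and homogeneity gives $\|f((1-t)A+tB)\|\leq\|(1-t)f(A)+tf(B)\|\leq(1-t)\|f(A)\|+t\|f(B)\|$, which is precisely convexity of $\|f(\cdot)\|$.

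For (ii), the decisive step is the spectral identity $\|f(A)\|=f(\|A\|)$ for operator monotone $f$ and positive semi-definite $A$. I would justify it by observing that an operator monotone function on $[0,\infty)$ is in particular increasing as a scalar function, so that $\sup f(\sigma(A))=f(\sup\sigma(A))$, together with $\|A\|=\sup\sigma(A)$ and $\|f(A)\|=\sup f(\sigma(A))$ from the spectral mapping theorem. Granting this, the estimate $\|(1-t)A+tB\|\leq(1-t)\|A\|+t\|B\|\leq\max\{\|A\|,\|B\|\}$ combined with scalar monotonicity of $f$ yields $\|f((1-t)A+tB)\|=f(\|(1-t)A+tB\|)\leq\max\{f(\|A\|),f(\|B\|)\}=\max\{\|f(A)\|,\|f(B)\|\}$, i.e. quasi-convexity.

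For (iii), I would use $X\leq\|X\|1_H$ and operator monotonicity to get $f(X)\leq f(\|X\|)1_H$; here the hypothesis $f(0)=0$ is essential, since with $X\geq 0$ it forces $f(X)\geq f(0)1_H=0$, so that $0\leq f(X)\leq f(\|X\|)1_H$. The main instrument is then the monotonicity of a unitarily invariant norm on positive operators, $0\leq X\leq Y\Rightarrow |||X|||\leq|||Y|||$, which delivers $|||f(X)|||\leq f(\|X\|)\,|||1_H|||$. Feeding in the scalar bound from (ii), $f(\|(1-t)A+tB\|)\leq\max\{f(\|A\|),f(\|B\|)\}=\max\{\|f(A)\|,\|f(B)\|\}$, completes the chain. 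I expect this part to be the main obstacle: the two delicate ingredients are the passage to positivity of $f(X)$ from $f(0)=0$ (without which the unitarily invariant norm estimate is unavailable) and the monotonicity of unitarily invariant norms on the positive cone, which is the least elementary fact used. I would also pay attention to the final comparison, making sure the quasi-convexity is formulated with the intended normalization by $|||1_H|||$, since the bound produced naturally involves $\max\{\|f(A)\|,\|f(B)\|\}\,|||1_H|||$ rather than $\max\{|||f(A)|||,|||f(B)|||\}$ directly.
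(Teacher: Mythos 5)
Your argument is essentially identical to the paper's: part (i) from operator convexity via monotonicity of the norm on the positive cone plus the triangle inequality, part (ii) via the spectral identity $\|f(A)\|=f(\|A\|)$ for operator monotone $f$ and positive semi-definite $A$, and part (iii) via $X\leq\|X\|1_H$ with $f(0)=0$ supplying the positivity $0\leq f(X)\leq f(\|X\|)1_H$ needed to apply the unitarily invariant norm. The normalization mismatch you flag at the end of (iii) is a genuine point, but it is present in the paper's own proof as well: the paper likewise stops at $|||f((1-t)A+tB)|||\leq\max\{\|f(A)\|,\|f(B)\|\}\,|||1_H|||$, with operator norms rather than $\max\{|||f(A)|||,|||f(B)|||\}$ on the right, so you have faithfully reproduced the paper's reasoning, caveat included.
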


\begin{example}\label{e1}
\begin{enumerate}
\item[(i)]
Let $f$ be the real function defined by $f(t) = t^{r}~(r\in\mathbb{R})$ on $(0,\infty)$.
If $r\notin (0,1)$, then the map $\|f(\cdot)\|$ is convex on $\sigma((0,\infty))$. Because for $r\geq 1$, we have
\begin{align*}
\|((1-t)A+tB)^{r}\|&=\left\|(1-t)A+tB\right\|^r\leq\big((1-t)\|A\|+t\|B\|\big)^r\\
&\leq(1-t)\|A\|^r+t\|B\|^r=(1-t)\|A^r\|+t\|B^r\|.
\end{align*}
For $r\leq -1$, we obtain
\begin{align*}
\|((1-t&)A+tB)^{r}\|=\left\|((1-t)A+tB)^{-1}\right\|^{-r}\\
&\leq \left\|((1-t)A^{-1}+tB^{-1})\right\|^{-r}\leq\big((1-t)\|A^{-1}\|+t\|B^{-1}\|\big)^{-r}\\
&\leq(1-t)\|A^{-1}\|^{-r}+t\|B^{-1}\|^{-r}=(1-t)\|A^r\|+t\|B^r\|,
\end{align*}
and for $-1\leq r\leq 0$, it follows from Proposition \ref{p1}.

Suppose that $0\leq r\leq 1$. The following inequality implies that
$\|f(\cdot)\|$ is quasi-convex on $\sigma((0,\infty))$.
\begin{align*}
\left\|\big((1-t)A+tB\big)^r\right\|&=\left\|(1-t)A+tB\right\|^r\leq \big((1-t)\|A\|+t\|B\|\big)^r\\
&\leq\left(\max\{\|A\|, \|B\|\}\right)^r=\max\{\|A^r\|, \|B^r\|\}.
\end{align*}
In this case, we also have
\begin{align*}
\left\|\big((1-t)A+tB\big)^r\right\|&\leq \big((1-t)\|A\|+t\|B\|\big)^r\leq(1-t)^r\|A\|^r+t^r\|B\|^r\\
&=(1-t)^r\|A^r\|+t^r\|B^r\|.
\end{align*}
Therefore $\|f(\cdot)\|$ is r-convex on $\sigma((0,\infty))$.
\item[(ii)]
For the real function $f(t)=t^2-1$ on $\mathbb{R}$, the function $\|f(\cdot)\|$ is not quasi-convex, therefore is not
convex. Because
\begin{align*}
1= \|f(0)\|=\left\|f\left(\frac{-1_H+1_H}{2}\right)\right\|\nleqslant
 \max\{\|f(-1_H)\|,\|f(1_H)\|\}=0.
\end{align*}
\end{enumerate}
\end{example}

\begin{theorem}\label{t.1}
Let $f$ be an operator monotone function on $(0,\infty)$ and $n$ a positive integer. Then, $\|f^{(n)}(\cdot)\|$ is quasi-convex, i.e.,
\begin{align}\label{2.1}
\|f^{(n)}((1-\nu)A+\nu B)\|\leq \max\{\|f^{(n)}(A)\|, \|f^{(n)}(B)\|\},
\end{align}
for all positive definite operators $A, B$ and $0\leq\nu\leq 1$.
\end{theorem}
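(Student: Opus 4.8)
The plan is to reduce the operator statement to a purely scalar statement about the one‐variable function $f^{(n)}$ and then invoke the spectral description of the operator norm. For a self adjoint $A$ with $\sigma(A)\subset(0,\infty)$ and a real continuous $g$ on $(0,\infty)$ one has $\|g(A)\|=\max\{|g(\lambda)|:\lambda\in\sigma(A)\}$. Hence everything follows once the shape of $f^{(n)}$ on $(0,\infty)$ is understood: concretely, I want to show that $h:=(-1)^{n-1}f^{(n)}$ is a nonnegative and nonincreasing function. Granting this, since $f^{(n)}$ has constant sign we get $|f^{(n)}|=h$, so $\|f^{(n)}(A)\|=\max_{\lambda\in\sigma(A)}h(\lambda)=h(\lambda_{\min}(A))$, where $\lambda_{\min}(A)=\min\sigma(A)$, because $h$ is decreasing and attains its largest value on the compact set $\sigma(A)$ at its smallest point.

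The heart of the argument — and the step I expect to be the main obstacle — is establishing that $(-1)^{n-1}f^{(n)}\geq0$ and is nonincreasing, i.e. that $f'$ is completely monotone. I would obtain this from L\"owner's integral representation of operator monotone functions on $(0,\infty)$: there are $a\in\mathbb{R}$, $b\geq0$ and a positive measure $\mu$ on $(0,\infty)$ with
$$f(t)=a+bt+\int_0^\infty\frac{st}{s+t}\,d\mu(s).$$
Writing $\frac{st}{s+t}=s-\frac{s^2}{s+t}$ and differentiating under the integral sign gives, for every $n\geq2$,
$$(-1)^{n-1}f^{(n)}(t)=\int_0^\infty\frac{n!\,s^2}{(s+t)^{n+1}}\,d\mu(s),$$
while for $n=1$ there is an extra nonnegative term $b$. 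Each integrand is nonnegative and decreasing in $t$, and $b\geq0$, so $h=(-1)^{n-1}f^{(n)}$ is nonnegative and nonincreasing on $(0,\infty)$, as required. Equivalently, one may simply cite the known fact that the derivative of an operator monotone function on $(0,\infty)$ is completely monotone; either route justifies the reduction made above.

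It then remains to compare the smallest spectral points of $A$, $B$ and $C:=(1-\nu)A+\nu B$. From $A\geq\lambda_{\min}(A)1_H$ and $B\geq\lambda_{\min}(B)1_H$ I obtain $C\geq\big((1-\nu)\lambda_{\min}(A)+\nu\lambda_{\min}(B)\big)1_H\geq\min\{\lambda_{\min}(A),\lambda_{\min}(B)\}1_H$, hence $\lambda_{\min}(C)\geq\min\{\lambda_{\min}(A),\lambda_{\min}(B)\}$. Since $h$ is nonincreasing, this yields
$$\|f^{(n)}(C)\|=h(\lambda_{\min}(C))\leq h\big(\min\{\lambda_{\min}(A),\lambda_{\min}(B)\}\big)=\max\{h(\lambda_{\min}(A)),h(\lambda_{\min}(B))\},$$
and the right-hand side equals $\max\{\|f^{(n)}(A)\|,\|f^{(n)}(B)\|\}$, which is exactly \eqref{2.1}.

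In summary, the only genuinely nontrivial ingredient is the complete monotonicity of $f'$ (equivalently, the sign pattern $(-1)^{n-1}f^{(n)}\geq0$ together with monotonicity of $|f^{(n)}|$); once the sign and monotonicity of $f^{(n)}$ are in hand, the passage from scalars to operators is routine spectral calculus combined with the elementary bound on $\lambda_{\min}$ under convex combinations. I would therefore organise the write‑up so that the representation-theoretic lemma on $f^{(n)}$ is isolated first, and the quasi‑convexity conclusion is then a short corollary.
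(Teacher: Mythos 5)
Your proposal is correct and follows essentially the same route as the paper: both derive from L\"owner's integral representation (you use the kernel $\frac{st}{s+t}$, the paper the equivalent $\frac{\lambda}{\lambda^2+1}-\frac{1}{\lambda+t}$) that $(-1)^{n-1}f^{(n)}$ is nonnegative and nonincreasing, so that $\|f^{(n)}(A)\|$ equals $|f^{(n)}|$ evaluated at the bottom of the spectrum, and then both conclude by noting that the spectral bottom of $(1-\nu)A+\nu B$ is at least $\min\{\lambda_{\min}(A),\lambda_{\min}(B)\}$. Your write-up is in fact slightly cleaner in that it states the sign pattern and monotonicity of $f^{(n)}$ explicitly rather than leaving the case $n\geq 2$ to ``the same argument,'' but the mathematical content is identical.
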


\begin{proof}
It is known that every operator monotone function
$f$ on $(0,\infty)$ has a special integral representation as follows:
\begin{align}\label{2.2}
f(t)=\alpha+\beta
t+\int_0^\infty\left(\frac{\lambda}{\lambda^2+1}-\frac{1}{\lambda+t}\right)d\mu(\lambda),
\end{align}
where $\alpha, \beta$ are real numbers, $\beta \geq 0$, and $\mu$ is
a positive measure on $(0,\infty)$ \cite[(V.49)]{bha}, such that
\begin{equation*}
\int_0^\infty\frac{1}{\lambda^2+1}d\mu(\lambda)<\infty.
\end{equation*}
The integral representation \eqref{2.2} implies that every operator monotone function on $(0,\infty)$
is infinitely differentiable. Therefore

\begin{align}\label{2.3}
f(A)=\alpha 1_H+\beta
A+\int_0^\infty\left[\frac{\lambda}{\lambda^2+1}1_H-\left(\lambda+A\right)^{-1}\right]d\mu(\lambda).
\end{align}
From the integral representation we also have
\begin{align}\label{2.4}
f'(t)=\beta+\int_0^\infty
\frac{1}{\left(\lambda+t\right)^2}d\mu(\lambda).
\end{align}
Hence
 \begin{align*}
 \left\|f'(A)\right\|=\left\|\beta 1_H+\int_0^\infty\left(\lambda+A\right)^{-2}d\mu(\lambda)\right\|.
 \end{align*}

 Let $a_0=\inf\{\langle Ax,x\rangle :\|x\|=1\}$ and  $b_0=\inf\{\langle Bx,x\rangle :\|x\|=1\}$. Then since $\beta\geq0$, we have
 \begin{align*}
\left\|f'(A)\right\|=\beta+\int_0^\infty\left(\lambda+a_0\right)^{-2}d\mu\left(\lambda\right)
=\beta+\int_0^\infty\left\|(\lambda+A)^{-1}\right\|^2d\mu\left(\lambda\right),
 \end{align*}
 and similarly,
 \begin{align*}
\left\|f'(B)\right\|=\beta+\int_0^\infty\left(\lambda+b_0\right)^{-2}d\mu\left(\lambda\right)
=\beta+\int_0^\infty\left\|(\lambda+B)^{-1}\right\|^2d\mu\left(\lambda\right).
 \end{align*}

 Suppose that $ a_0\leq b_0$, then $a_0\leq(1-\nu)a_0+\nu b_0\leq\inf\{\langle [(1-\nu)A+\nu B ]x,x\rangle :\|x\|=1\}$.
 This implies that $\|f'(A)\|\geq \|f'((1-\nu)A+\nu B)\|$, and proves the desired inequality (\ref{2.1}), for $n=1$.

 Analogously for $n\geq2$, we have
 \begin{align*}
 \left\|f^{(n)}(A)\right\|&=\left\|(-1)^{n+1}n!\int_0^\infty\left(\lambda+A\right)^{-n-1}d\mu(\lambda)\right\|\\
&=n!\int_0^\infty\left\|(\lambda+A)^{-1}\right\|^{n+1}d\mu(\lambda)
 \end{align*}

 The same argument will show
that in this case again the inequality (\ref{2.1}) holds.
\end{proof}
The following theorem generalises Theorem X.3.4 in \cite{bha}.
\begin{theorem}\label{t2}
Let $f$ be an operator monotone function on $(0,\infty)$ and  $|||.|||$ be a unitarily invariant norm on $B(H)$.
Let $A$ be any positive definite operator, then
 $$|||D^n f(A)|||\leq\|f^{(n)}(A)\|,$$ for all positive integers $n$.
\end{theorem}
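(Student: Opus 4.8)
The plan is to reduce everything to the integral representation \eqref{2.2} and to bound the multilinear operator $D^n f(A)$ term by term. First I would record the $n$-th Fr\'echet derivative of the single building block $g_\lambda(A)=(\lambda+A)^{-1}$. Writing $R=R(\lambda)=(\lambda+A)^{-1}$, the first derivative is $Dg_\lambda(A)(B)=-RBR$, and an easy induction (differentiating this expression repeatedly and using the Leibniz rule) gives
\[
D^n g_\lambda(A)(B_1,\dots,B_n)=(-1)^n\sum_{\pi\in S_n} R\,B_{\pi(1)}\,R\,B_{\pi(2)}\cdots R\,B_{\pi(n)}\,R .
\]
Since the affine part $\alpha 1_H+\beta A$ of \eqref{2.2} has vanishing Fr\'echet derivatives of order $\ge 2$ and derivative $\beta B$ in order $1$, differentiating \eqref{2.3} under the integral sign yields, for $n\ge 2$,
\[
D^n f(A)(B_1,\dots,B_n)=(-1)^{n+1}\int_0^\infty \sum_{\pi\in S_n} R\,B_{\pi(1)}\,R\cdots R\,B_{\pi(n)}\,R \; d\mu(\lambda),
\]
with the extra summand $\beta B_1$ added when $n=1$.

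Next I would estimate one product in the standard way for unitarily invariant norms, using $|||XYZ|||\le \|X\|\,|||Y|||\,\|Z\|$ (operator norm on the outer factors) together with submultiplicativity of the operator norm. Keeping a single factor $B_{\pi(1)}$ inside the unitarily invariant norm and placing all remaining factors in operator norm gives
\[
|||R\,B_{\pi(1)}\,R\cdots R\,B_{\pi(n)}\,R|||\le \|R\|^{n+1}\,|||B_{\pi(1)}|||\prod_{j\ge 2}\|B_{\pi(j)}\| .
\]
Because the operator norm is dominated by any unitarily invariant norm, the normalization $|||B_i|||=1$ forces $\|B_i\|\le 1$, so each product is bounded by $\|R\|^{n+1}=\|(\lambda+A)^{-1}\|^{n+1}$.

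Finally I would sum over the $n!$ permutations and integrate: the triangle inequality gives
\[
|||D^n f(A)(B_1,\dots,B_n)|||\le n!\int_0^\infty \|(\lambda+A)^{-1}\|^{n+1}\,d\mu(\lambda),
\]
and the right-hand side is exactly $\|f^{(n)}(A)\|$ by the formula already computed in the proof of \thmref{t.1} (for $n=1$ the lone $\beta$-term is absorbed, since $\|f'(A)\|=\beta+\int_0^\infty \|(\lambda+A)^{-1}\|^2\,d\mu$). Taking the supremum over all $B_i$ with $|||B_i|||=1$ then yields the claim.

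I expect the main obstacle to be the analytic bookkeeping rather than the estimate itself: justifying that the $n$-th Fr\'echet derivative may be taken under the integral sign, which needs uniform (dominated) convergence of the resolvent integrals, and here the hypothesis that $A$ is positive definite is essential, so that $\|(\lambda+A)^{-1}\|=(\lambda+a_0)^{-1}$ with $a_0>0$ is integrable against $d\mu$. Verifying the resolvent derivative formula with its symmetrization over $S_n$ is the other careful point; once both are in place, the norm estimate collapses cleanly and the comparison with $\|f^{(n)}(A)\|$ is immediate.
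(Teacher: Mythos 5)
Your proof is correct and coincides with the paper's own argument: both differentiate the integral representation \eqref{2.3} to obtain the symmetrized resolvent formula for $D^nf(A)$ and then apply the estimate $|||XYZ|||\leq \|X\|\,|||Y|||\,\|Z\|$ (Bhatia's (IV.40)), keeping a single $B_{\pi(j)}$ in the unitarily invariant norm, to arrive at $n!\int_0^\infty\left\|(\lambda+A)^{-1}\right\|^{n+1}d\mu(\lambda)=\left\|f^{(n)}(A)\right\|$. The only point to flag is that your step $\|B_i\|\leq|||B_i|||$ requires the standard normalization of unitarily invariant norms ($|||P|||=1$ for a rank-one projection $P$), a convention the paper's proof relies on implicitly at exactly the same place; your explicit justification of differentiation under the integral sign via $a_0>0$ is a detail the paper omits but does not change the route.
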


\begin{proof}
From (\ref{2.3}) we have
\begin{equation*}
Df(A)(B)=\beta B+\int_0^\infty (\lambda +A)^{-1}B(\lambda +A)^{-1}d\mu(\lambda).
\end{equation*}
Since $|||(\lambda +A)^{-1}B(\lambda +A)^{-1}|||\leq \|(\lambda +A)^{-1}\|~|||B|||~\|(\lambda +A)^{-1}\|$,
\cite[(IV.40)]{bha}.
Hence,
\begin{equation*}
|||Df(A)(B)|||\leq\beta +\int_0^\infty \|(\lambda +A)^{-1}\|^2d\mu(\lambda)=\|f'(A)\|.
\end{equation*}

For $n\geq2$, we have
\begin{align*}
&D^n f(A)(B_1,B_2,...B_n)=(-1)^{n+1}\int_0^\infty \left[\sum_\sigma (\lambda +A)^{-1}B_{\sigma(1)}(\lambda +A)^{-1}\right.\\
&\dots~(\lambda +A)^{-1}B_{\sigma(n)}(\lambda +A)^{-1}\Big]d\mu(\lambda).
\end{align*}

Using (IV.40) in \cite{bha}, we obtain
\begin{align*}
|||D^n f(A)|||\leq n!\int_0^\infty \|(\lambda +A)^{-1}\|^{n+1}d\mu(\lambda)=\left\|f^{(n)}(A)\right\|.
\end{align*}

\end{proof}

Since the operator norm is a unitarily invariant norm, Theorem \ref{t2} implies that $f\in \mathcal{D}^{(n)}$.

\subsection{Hermite-Hadamard type inequalities}

We continue this section with the following technical lemma.

\begin{lemma}\cite[Lemma 1]{gha}\label{l1}
Let $I\subseteq\mathbb{R}$ be an open interval and $f : I
\rightarrow\mathbb{R}$ be a twice differentiable function on $I$
whose second derivative $f{''}$ is continuous on $I$ and
$\nu\in[0,1]$. If $A$ and $B$ are self adjoint operators with spectra in $I$, then
\begin{multline}\label{2.5}
\int_0^1\left(t-\nu\right)Df\left((1-t)A+tB\right)(B-A)dt\\
=\nu f(A)+(1-\nu)f(B)-\int_0^1f\left((1-t)A+tB\right)dt.
\end{multline}
\end{lemma}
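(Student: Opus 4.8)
The plan is to reduce this operator identity to a single integration by parts for an operator-valued function of the scalar variable $t$, exactly mirroring the classical scalar proof. First I would introduce the curve $\gamma(t)=(1-t)A+tB$ for $t\in[0,1]$ and set $g(t)=f(\gamma(t))$, where $f$ denotes the map induced on self adjoint operators. Since $I$ is an interval and $A,B\in\sigma(I)$, every convex combination $\gamma(t)$ is self adjoint with $Sp(\gamma(t))\subseteq I$, because for any unit vector $x$ the quantity $\langle\gamma(t)x,x\rangle=(1-t)\langle Ax,x\rangle+t\langle Bx,x\rangle$ is a convex combination of numbers in $I$, so the numerical range and hence the spectrum of $\gamma(t)$ lie in $I$. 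Thus $g$ is well defined and takes values among the self adjoint operators, and the hypothesis that $f$ is twice differentiable with $f''$ continuous guarantees that the induced map $X\mapsto f(X)$ is continuously Fr\'echet differentiable on $\sigma(I)$, whence $g$ is a $C^1$ operator-valued function on $[0,1]$.

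The key computation is the chain rule. Since $\gamma'(t)=B-A$ is constant, one has $g'(t)=Df(\gamma(t))(B-A)=Df\left((1-t)A+tB\right)(B-A)$, which is precisely the integrand appearing, up to the scalar weight $t-\nu$, on the left-hand side of \eqref{2.5}. I would then apply integration by parts to the operator-valued integral $\int_0^1(t-\nu)g'(t)\,dt$, taking $u=t-\nu$ and $dv=g'(t)\,dt$, so that $du=dt$ and $v=g(t)$; this gives
\begin{align*}
\int_0^1(t-\nu)g'(t)\,dt=\bigl[(t-\nu)g(t)\bigr]_0^1-\int_0^1 g(t)\,dt.
\end{align*}
Evaluating the boundary term yields $(1-\nu)g(1)+\nu g(0)=\nu f(A)+(1-\nu)f(B)$, since $g(0)=f(A)$ and $g(1)=f(B)$, and substituting $g(t)=f((1-t)A+tB)$ back into the remaining integral produces exactly the right-hand side of \eqref{2.5}.

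The only points that require genuine care are the two analytic facts used above in the Banach-space-valued setting: that the induced map is Fr\'echet differentiable with the stated derivative, and that integration by parts is valid for $B(H)$-valued (Riemann) integrals. For the former I would invoke the standard theory of derivatives of operator functions as developed in \cite{bha}, noting that continuity of $f''$ makes $t\mapsto g'(t)$ continuous, so every integral in sight is a bona fide operator-valued Riemann integral. For the latter, integration by parts holds verbatim for continuously differentiable vector-valued functions, since it follows by integrating the product-rule identity $\frac{d}{dt}\bigl[(t-\nu)g(t)\bigr]=(t-\nu)g'(t)+g(t)$ and applying the fundamental theorem of calculus; no commutativity is needed, as the scalar weight $t-\nu$ commutes with everything. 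Consequently the main obstacle is really just establishing the chain-rule identity $g'(t)=Df(\gamma(t))(B-A)$ rigorously; once that is in hand, the identity follows precisely as in the scalar case.
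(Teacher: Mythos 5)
Your proof is correct: the chain-rule identity $\frac{d}{dt}f((1-t)A+tB)=Df((1-t)A+tB)(B-A)$ combined with integration by parts against the weight $t-\nu$ yields exactly \eqref{2.5}, and your care about the two analytic points (Fr\'echet differentiability of the induced map, for which $f''$ continuous suffices, and the spectral containment $Sp((1-t)A+tB)\subseteq I$) is well placed. Note that the paper itself gives no proof — it imports the lemma verbatim from \cite[Lemma 1]{gha} — and your argument is precisely the standard one used in that cited source, so there is nothing to add.
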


\begin{proposition}\label{p2}
Let $f $ be a real function on $(0,\infty)$.
If $f\in C^2((0,\infty))\cap\mathcal{D}^{(1)}$ and $\|f'(\cdot)\|$ is
convex on $\sigma\left((0,\infty)\right)$, then
\begin{multline}\label{2.6}
\left\|\nu f(A)+(1-\nu)f(B)-\int_0^1f\left((1-t)A+tB\right)dt\right\|\\
\leq\frac{1}{6}\left[2(1-\nu)^3-3(1-\nu)+2\right]\|f'(A)\|\|B-A\|\\
+\frac{1}{6}\left(2\nu^3-3\nu+2\right)\|f'(B)\|\|B-A\|,
\end{multline}
for every positive definite operators $A,B$ and $0\leq\nu\leq1$.
\end{proposition}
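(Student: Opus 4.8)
The plan is to start from the operator identity in Lemma~\ref{l1}, pass to norms, and then apply in turn the three hypotheses---$f\in\mathcal{D}^{(1)}$, convexity of $\|f'(\cdot)\|$, and positivity of the segment---before finishing with an elementary scalar integration.

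First I would note that, since the positive definite operators form a convex cone, the operator $(1-t)A+tB$ is positive definite for every $t\in[0,1]$, so $Df$ and $f'$ are defined along the whole segment joining $A$ and $B$. Applying Lemma~\ref{l1} and taking the norm of both sides, the left-hand side of \eqref{2.6} equals $\left\|\int_0^1(t-\nu)Df\big((1-t)A+tB\big)(B-A)\,dt\right\|$. Using the standard norm--integral estimate $\left\|\int_0^1 g(t)\,dt\right\|\le\int_0^1\|g(t)\|\,dt$ for the operator-valued integrand, this is bounded by $\int_0^1|t-\nu|\,\big\|Df((1-t)A+tB)(B-A)\big\|\,dt$.

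Next I would estimate the integrand pointwise. From the definition of the operator norm of the linear map $Df(C)$ one has $\|Df(C)(B-A)\|\le\|Df(C)\|\,\|B-A\|$; taking $C=(1-t)A+tB$ and invoking $f\in\mathcal{D}^{(1)}$ gives $\|Df((1-t)A+tB)\|=\|f'((1-t)A+tB)\|$. Finally the convexity of $\|f'(\cdot)\|$ on $\sigma((0,\infty))$ yields
\begin{align*}
\|f'((1-t)A+tB)\|\le(1-t)\|f'(A)\|+t\|f'(B)\|.
\end{align*}
Collecting these bounds, the left side of \eqref{2.6} is at most
\begin{align*}
\|B-A\|\int_0^1|t-\nu|\big[(1-t)\|f'(A)\|+t\|f'(B)\|\big]\,dt.
\end{align*}

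The remaining task is to evaluate the two scalar integrals $\int_0^1|t-\nu|(1-t)\,dt$ and $\int_0^1|t-\nu|t\,dt$. Splitting each at $t=\nu$, where $|t-\nu|$ changes sign, and integrating the resulting polynomials separately over $[0,\nu]$ and $[\nu,1]$, one finds that the first equals $\frac{1}{6}\big[2(1-\nu)^3-3(1-\nu)+2\big]$ and the second equals $\frac{1}{6}\big(2\nu^3-3\nu+2\big)$, which are exactly the coefficients of $\|f'(A)\|\,\|B-A\|$ and $\|f'(B)\|\,\|B-A\|$ in \eqref{2.6}. I do not anticipate any genuine obstacle: every inequality invoked is either a stated hypothesis or a standard norm estimate, and the only computation is the elementary integration, whose bookkeeping across the breakpoint $t=\nu$ is the single place that requires a little care.
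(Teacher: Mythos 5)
Your proposal is correct and follows essentially the same route as the paper's own proof: apply Lemma~\ref{l1}, bound the norm of the integral by the integral of the norm, use $f\in\mathcal{D}^{(1)}$ to replace $\|Df((1-t)A+tB)\|$ by $\|f'((1-t)A+tB)\|$, invoke convexity of $\|f'(\cdot)\|$, and evaluate $\int_0^1|t-\nu|t\,dt=\frac{1}{6}(2\nu^3-3\nu+2)$ and $\int_0^1|t-\nu|(1-t)\,dt=\frac{1}{6}\bigl[2(1-\nu)^3-3(1-\nu)+2\bigr]$. The only difference is cosmetic: you spell out the breakpoint computation at $t=\nu$ and the positivity of the segment $(1-t)A+tB$, which the paper leaves implicit.
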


\begin{proof}
Using Lemma \ref{l1}, we have
\begin{align*}
&\left\|\nu f(A)+(1-\nu)f(B)-\int_0^1f\left((1-t)A+tB\right)dt\right\|\\
&=\left\|\int_0^1(t-\nu)Df\left((1-t)A+tB\right)(B-A)dt\right\|\\
&\leq\|B-A\|\int_0^1|t-\nu|\left\|Df\left((1-t)A+tB\right)\right\|dt\\
&=\|B-A\|\int_0^1|t-\nu|\left\|f'\left((1-t)A+tB\right)\right\|dt\\
&\leq
\|B-A\|\int_0^1\Big(|t-\nu|(1-t)\|f'(A)\|+t\|f'(B)\|\Big)dt,
\end{align*}
since  $\|f'(\cdot)\|$ is convex. Now, by the following
equalities  we get the desired inequality \eqref{2.6}.
\begin{align*}
&\int_0^1|t-\nu|t dt=\frac{1}{6}\left(2\nu^3-3\nu+2\right),\\
&\int_0^1|t-\nu|(1-t) dt=\frac{1}{6}\left(2(1-\nu)^3-3(1-\nu)+2\right).
\end{align*}
\end{proof}

We give new inequalities as to the Hermite-Hadamard inequality in the following remark.
By the same argument used in the proof of Proposition \ref{p2} and by the following equalities
\begin{align*}
&\int_0^1|t-\nu| dt=\nu^2-\nu+\frac{1}{2}\\
&\int_0^1|t-\nu|t^s dt=\left(\frac{1}{s+2}-\frac{\nu}{s+1}+\frac{2\nu^{s+2}}{(s+1)(s+2)}\right)\\
&\int_0^1|t-\nu|(1-t)^s dt=\left(\frac{1}{s+2}-\frac{1-\nu}{s+1}+\frac{2(1-\nu)^{s+2}}{(s+1)(s+2)}\right).
\end{align*}

\begin{proposition}\label{p3}
Let $f,g$ be two operator monotone functions on $(0,\infty)$. Then, for every unitarily invariant norm $|||.|||$ and
every positive definite operators $A,B$,
\begin{align}\label{2.60}
&\left|\left|\left|\nu f(A)g(A)+(1-\nu)f(B)g(B)-\int_0^1(fg)\left((1-t)A+tB\right)dt\right|\right|\right|\notag\\
&\leq\left(\nu^2-\nu+\frac{1}{2}\right)|||B-A|||\Big[\max\left\{\|f'(A)\|,\|f'(B)\|\right\}\times\max\left\{\|g(A)\|,\|g(B)\|\right\}\notag\\
&+\max\left\{\|f(A)\|,\|f(B)\|\right\}\times \max\left\{\|g'(A)\|,\|g'(B)\|\right\}\Big].
\end{align}
\end{proposition}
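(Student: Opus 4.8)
The plan is to follow the scheme of the proof of Proposition \ref{p2}, now applied to the product $fg$ and to an arbitrary unitarily invariant norm. Since $f$ and $g$ are operator monotone on $(0,\infty)$, the integral representation \eqref{2.2} shows that each is infinitely differentiable, and hence so is the product $fg$. Consequently Lemma \ref{l1} applies verbatim to $fg$, yielding
\[
\nu f(A)g(A)+(1-\nu)f(B)g(B)-\int_0^1(fg)\big((1-t)A+tB\big)\,dt=\int_0^1(t-\nu)\,D(fg)\big((1-t)A+tB\big)(B-A)\,dt.
\]
Taking $|||\cdot|||$ of both sides and applying the triangle inequality for the integral reduces the problem to estimating $|||D(fg)(X)(B-A)|||$ for the positive definite operator $X=(1-t)A+tB$, uniformly in $t\in[0,1]$.

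The crucial observation is that $fg$ need not be operator monotone, so Theorem \ref{t2} cannot be applied to it directly; instead I would split the derivative by the Leibniz rule for Fr\'echet derivatives,
\[
D(fg)(X)(Y)=\big(Df(X)(Y)\big)g(X)+f(X)\big(Dg(X)(Y)\big).
\]
The mixed bounds $|||ST|||\leq\|S\|\,|||T|||$ and $|||ST|||\leq|||S|||\,\|T\|$ for unitarily invariant norms (\cite[(IV.40)]{bha}) then give
\[
|||D(fg)(X)(B-A)|||\leq\|g(X)\|\,|||Df(X)(B-A)|||+\|f(X)\|\,|||Dg(X)(B-A)|||.
\]
Now Theorem \ref{t2} may be invoked for $f$ and $g$ separately: since $|||Df(X)(B-A)|||\leq|||Df(X)|||\,|||B-A|||\leq\|f'(X)\|\,|||B-A|||$ and likewise for $g$, one obtains
\[
|||D(fg)(X)(B-A)|||\leq\big(\|f'(X)\|\,\|g(X)\|+\|f(X)\|\,\|g'(X)\|\big)|||B-A|||.
\]

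To conclude, I would bound each of the four scalar factors uniformly in $t$ by its larger endpoint value. The functions $\|f'(\cdot)\|$ and $\|g'(\cdot)\|$ are quasi-convex by Theorem \ref{t.1}, and $\|f(\cdot)\|,\|g(\cdot)\|$ are quasi-convex by Proposition \ref{p1}(ii) (here using that $f,g$ are taken positive). Thus for every $t\in[0,1]$ one has $\|f'(X)\|\leq\max\{\|f'(A)\|,\|f'(B)\|\}$, $\|g(X)\|\leq\max\{\|g(A)\|,\|g(B)\|\}$, and analogously for the other two factors; since every factor is nonnegative, the products are dominated by the products of the maxima. Pulling the resulting constant bracket out of the integral leaves only $\int_0^1|t-\nu|\,dt=\nu^2-\nu+\tfrac12$, and substituting this reproduces \eqref{2.60}. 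The step I expect to need the most care is the second one: recognizing that, although $fg$ itself lies outside the reach of Theorem \ref{t2}, the Leibniz rule together with the mixed submultiplicativity of $|||\cdot|||$ lets one transfer the estimate onto $f$ and $g$ individually, to which both Theorem \ref{t2} and the quasi-convexity results do apply.
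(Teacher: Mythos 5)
Your proposal is correct and follows essentially the same route as the paper's own proof: Lemma \ref{l1} applied to $fg$, the Leibniz rule for the Fr\'echet derivative, the mixed bounds $|||ST|||\leq\|S\|\,|||T|||$ and $|||ST|||\leq|||S|||\,\|T\|$ from \cite[(IV.40)]{bha}, Theorem \ref{t2} applied to $f$ and $g$ separately, quasi-convexity of $\|f(\cdot)\|,\|g(\cdot)\|,\|f'(\cdot)\|,\|g'(\cdot)\|$ via Proposition \ref{p1} and Theorem \ref{t.1}, and finally $\int_0^1|t-\nu|\,dt=\nu^2-\nu+\tfrac12$. You are in fact slightly more careful than the paper in flagging that Proposition \ref{p1}(ii) needs $f,g$ positive (the paper invokes it without comment) and in stating explicitly why Theorem \ref{t2} cannot be applied to $fg$ directly.
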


\begin{proof}
Using Lemma \ref{l1}, we get
\begin{align}\label{2.61}
&\left|\left|\left|\nu f(A)g(A)+(1-\nu)f(B)g(B)-\int_0^1(fg)\left((1-t)A+tB\right)dt\right|\right|\right|\notag\\
&=\left|\left|\left|\int_0^1(t-\nu)D(fg)\left((1-t)A+tB\right)dt\right|\right|\right|\notag\\
&=\Big|\Big|\Big|\int_0^1(t-\nu)\Big[Df\left((1-t)A+tB\right)(B-A)g((1-t)A+tB)\notag\\
&+f((1-t)A+tB)Dg((1-t)A+tB)(B-A)dt\Big]\Big|\Big|\Big|\notag\\
&\leq \int_0^1|t-\nu|\Big[|||Df\left((1-t)A+tB\right)(B-A)g((1-t)A+tB)|||\notag\\
&+|||f((1-t)A+tB)Dg((1-t)A+tB)(B-A)|||dt\Big].
\end{align}
By inequality (IV.40) in \cite{bha}, we have
$$|||XY|||\leq ||X||~|||Y||| \text{ and }|||XY|||\leq |||X|||~||Y||~(X,Y\in B(H).$$
Therefore,
\begin{align}\label{2.62}
&\int_0^1|t-\nu|\Big[|||Df\left((1-t)A+tB\right)(B-A)g((1-t)A+tB)|||\notag\\
&+|||f((1-t)A+tB)Dg((1-t)A+tB)(B-A)|||dt\Big]\notag\\&\leq \int_0^1|t-\nu|\Big[|||Df\left((1-t)A+tB\right)(B-A)|||~||g((1-t)A+tB)||\notag\\
&+||f((1-t)A+tB)||~|||Dg((1-t)A+tB)(B-A)|||dt\Big]\notag\\
&\leq |||B-A|||\int_0^1|t-\nu|\Big[|||Df\left((1-t)A+tB\right)|||~||g((1-t)A+tB)||\notag\\
&+||f((1-t)A+tB)||~|||Dg((1-t)A+tB)|||dt\Big]\notag\\
&\leq |||B-A|||\int_0^1|t-\nu|\Big[||f'\left((1-t)A+tB\right)||~||g((1-t)A+tB)||\notag\\
&+||f((1-t)A+tB)||~||g'((1-t)A+tB)||dt\Big].
\end{align}
By Proposition \ref{p1} and Theorem \ref{t.1}, the functions $||f(\cdot)||, ||f'(\cdot)||,||g(\cdot)||,||g'(\cdot)||$
are quasi-convex. Therefore from relations \eqref{2.61} and \eqref{2.62}, we obtain inequality \eqref{2.60}.
\end{proof}

\begin{corollary}\label{p3}
Let $f,g$ be two operator monotone functions on $(0,\infty)$. Then, for every unitarily invariant norm $|||.|||$ and
every positive definite operators $A,B$,
\begin{align*}
&\left|\left|\left|f(A)g(A)-f(A)g(B)\right|\right|\right|\notag\\
&\leq|||B-A|||\Big[\max\left\{\|f'(A)\|,\|f'(B)\|\right\}\times\max\left\{\|g(A)\|,\|g(B)\|\right\}\notag\\
&+\max\left\{\|f(A)\|,\|f(B)\|\right\}\times \max\left\{\|g'(A)\|,\|g'(B)\|\right\}\Big].
\end{align*}
\end{corollary}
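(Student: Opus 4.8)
The plan is to obtain this inequality directly from \eqref{2.60} by evaluating the parameter $\nu$ at its two endpoints and then eliminating the common integral term with the triangle inequality. The key arithmetic observation is that the quadratic coefficient in \eqref{2.60} satisfies $\nu^2-\nu+\frac12=\frac12$ at both $\nu=0$ and $\nu=1$; hence each endpoint specialisation produces the same constant $\frac12$, and these will add to $1$, which is exactly the constant appearing in the corollary. I read the left-hand side as $f(A)g(A)-f(B)g(B)$, in agreement with the formula stated in the abstract.

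Write $M$ for the bracketed factor on the right of \eqref{2.60},
\[
M=\max\{\|f'(A)\|,\|f'(B)\|\}\max\{\|g(A)\|,\|g(B)\|\}+\max\{\|f(A)\|,\|f(B)\|\}\max\{\|g'(A)\|,\|g'(B)\|\},
\]
and set $P=\int_0^1(fg)\left((1-t)A+tB\right)dt$. Taking $\nu=1$ in \eqref{2.60} gives $|||f(A)g(A)-P|||\leq\frac12|||B-A|||\,M$, while $\nu=0$ gives $|||f(B)g(B)-P|||\leq\frac12|||B-A|||\,M$. Subtracting the common term $P$ and using the triangle inequality then yields
\begin{align*}
|||f(A)g(A)-f(B)g(B)|||&=\left|\left|\left|\left(f(A)g(A)-P\right)-\left(f(B)g(B)-P\right)\right|\right|\right|\\
&\leq|||f(A)g(A)-P|||+|||f(B)g(B)-P|||\\
&\leq\tfrac12|||B-A|||\,M+\tfrac12|||B-A|||\,M=|||B-A|||\,M,
\end{align*}
which is precisely the asserted bound.

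Since each step is merely a specialisation of the already-proved inequality \eqref{2.60} followed by the triangle inequality, I do not expect any genuine obstacle. The only point that has to be checked is the elementary fact that the two endpoint coefficients are both equal to $\frac12$ and sum to $1$, so that the quadratic factor disappears and the clean constant $1$ multiplying $|||B-A|||\,M$ emerges.
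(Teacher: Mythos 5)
Your proof is correct and matches the paper's intended derivation: the corollary is stated without proof as a consequence of \eqref{2.60}, and the paper's proof of the analogous Corollary \ref{c1}(ii) uses exactly your argument --- specialising $\nu=1$ and $\nu=0$ (each endpoint giving the coefficient $\tfrac12$) and combining the two estimates via the triangle inequality after cancelling the common integral term. You were also right to read the left-hand side as $f(A)g(A)-f(B)g(B)$; the printed $f(A)g(A)-f(A)g(B)$ is evidently a typo, as the version stated in the abstract confirms.
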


From inequality \eqref{2.60} for $g(t)=1$, we get the following result.

\begin{corollary}
Let $f$ be an operator monotone function on $(0,\infty)$. Then, for every unitarily invariant norm $|||.|||$ and
every positive definite operators $A,B$
\begin{align}\label{2.9}
&\left|\left|\left|\nu f(A)+(1-\nu)f(B)-\int_0^1f\left((1-t)A+tB\right)dt\right|\right|\right|\notag\\
&\leq\left(\nu^2-\nu+\frac{1}{2}\right)|||B-A|||\max\left\{\|f'(A)\|,\|f'(B)\|\right\}.
\end{align}
\end{corollary}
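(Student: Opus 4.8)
The plan is to derive this corollary directly from inequality \eqref{2.60} by specializing $g$ to the constant function $g(t)=1$ on $(0,\infty)$. First I would confirm that this $g$ satisfies the hypotheses of the proposition that yields \eqref{2.60}, namely that it is operator monotone: if $A\leq B$ are self adjoint operators with spectra in $(0,\infty)$, then $g(A)=1_H=g(B)$, so $g(A)\leq g(B)$ holds trivially, and hence \eqref{2.60} is available for this pair $f,g$.

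Next I would evaluate every quantity occurring in \eqref{2.60} under this choice. Since $g(A)=g(B)=1_H$, one has $\|g(A)\|=\|g(B)\|=1$ and therefore $\max\{\|g(A)\|,\|g(B)\|\}=1$. Because $g$ is constant, $g'\equiv 0$, so $g'(A)=g'(B)=0$ and $\max\{\|g'(A)\|,\|g'(B)\|\}=0$. Moreover the product $fg$ equals $f$, whence $f(A)g(A)=f(A)$, $f(B)g(B)=f(B)$, and $(fg)((1-t)A+tB)=f((1-t)A+tB)$.

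Substituting these identities into \eqref{2.60}, the left hand side becomes $|||\,\nu f(A)+(1-\nu)f(B)-\int_0^1 f((1-t)A+tB)\,dt\,|||$, while the bracketed factor on the right reduces to $\max\{\|f'(A)\|,\|f'(B)\|\}\cdot 1+\max\{\|f(A)\|,\|f(B)\|\}\cdot 0=\max\{\|f'(A)\|,\|f'(B)\|\}$, leaving the prefactor $(\nu^2-\nu+\tfrac12)\,|||B-A|||$ unchanged. This is exactly \eqref{2.9}, completing the argument. Since the reasoning is a pure substitution into an already established inequality, there is no genuine obstacle to anticipate; the only point meriting comment is the (trivial) verification that the constant function is operator monotone, which is precisely what licenses the application of \eqref{2.60}.
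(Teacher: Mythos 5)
Your proposal is correct and matches the paper's own argument exactly: the paper derives \eqref{2.9} by the one-line observation that it follows from \eqref{2.60} upon taking $g(t)=1$. Your additional verifications (operator monotonicity of the constant function, $\|g(A)\|=\|g(B)\|=1$, $g'\equiv 0$) are just the routine details the paper leaves implicit.
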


\begin{remark}
\begin{enumerate}
Let $f $ be a real function on $(0,\infty)$,
$f\in C^2((0,\infty))\cap\mathcal{D}^{(1)}$, and $0\leq\nu\leq1$. Put
\begin{equation*}
X:=\nu f(A)+(1-\nu)f(B)-\int_0^1f\left((1-t)A+tB\right)dt.
\end{equation*}
\item[(i)] If $\|f'(\cdot)\|$ is
quasi-convex on $\sigma((0,\infty))$, then for every positive definite operators $A,B$
\begin{align}\label{2.7}
\|X\|\leq\left(\nu^2-\nu+\frac{1}{2}\right)\|B-A\|\max\left\{\|f'(A)\|,\|f'(B)\|\right\}.
\end{align}
\item[(ii)] If $\|f'(\cdot)\|$ is $s$-convex on $\sigma((0,\infty))$, then for every positive definite operators $A,B$
\begin{align}\label{2.8}
\|X\|&\leq\left(\frac{1}{s+2}-\frac{1-\nu}{s+1}+\frac{2(1-\nu)^{s+2}}{(s+1)(s+2)}\right)\|f'(A)\|\|B-A\|\notag\\
&+\left(\frac{1}{s+2}-\frac{\nu}{s+1}+\frac{2\nu^{s+2}}{(s+1)(s+2)}\right)\|f'(B)\|\|B-A\|.
\end{align}
\end{enumerate}
\end{remark}

\subsection{norm inequalities for Matrices}

Let $A , B, X\in M_n(\mathbb{C})$ such that $A$ and $B$ be positive definite and $0\leq\nu\leq1$.
A difference version of the Heinz inequality
\begin{equation}\label{4.0.0}
|||A^\nu XB^{1-\nu}-A^{1-\nu}XB^\nu|||\leq |2\nu-1|~|||AX-XB|||
\end{equation}
was proved by Bhatia and Davis in \cite{bah5}.

Kapil, et.al.,\cite{kap1} proved that if $0<r\leq1$. Then
\begin{equation}\label{4.0}
|||A^rX-XB^r|||\leq r\max\{||A^{r-1}||, ||B^{r-1}||\} |||AX-XB|||.
\end{equation}
They also proved that if $\alpha\geq1$, and $\frac{1-\alpha}{2}\leq\nu\leq\frac{1+\alpha}{2}$, then
\begin{align}\label{4.1}
\alpha |||A^\nu XB^{1-\nu}&-A^{1-\nu}XB^\nu|||\notag\\
&\leq |2\nu-1|\max\{||A^{1-\alpha}||, ||B^{1-\alpha}||\} |||A^\alpha X-XB^\alpha|||.
\end{align}

The following theorem is a generalization of \eqref{4.0}.
\begin{theorem}\label{t3}
Let $A , B, X\in M_n(\mathbb{C})$ such that $A$ and $B$ be positive definite and $f,g$ be two operator monotone functions on $(0,\infty)$. Then
\begin{align}\label{4.2}
&|||f(A)g(A)X-Xf(B)g(B)|||\notag\\
&\leq |||AX-XB|||\Big[\max\{||f'(A)||, ||f'(B)||\}\times \max\{||g(A)||, ||g(B)||\}\notag\\
&+\max\{||f(A)||, ||f(B)||\}\times \max\{||g'(A)||, ||g'(B)||\} \Big].
\end{align}
\end{theorem}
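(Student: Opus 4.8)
The plan is to reduce the product estimate to a single-function commutator bound and then recombine the two functions via a Leibniz-type splitting. The core step I would establish first is that, for a single operator monotone function $f$ on $(0,\infty)$,
\begin{equation*}
|||f(A)X-Xf(B)|||\leq \max\{\|f'(A)\|,\|f'(B)\|\}\,|||AX-XB|||.
\end{equation*}
To prove this I would start from the integral representation \eqref{2.2}. Since the constants $\alpha$ and $\tfrac{\lambda}{\lambda^2+1}$ are scalars, they commute with $X$ and drop out of $f(A)X-Xf(B)$, leaving
\begin{equation*}
f(A)X-Xf(B)=\beta(AX-XB)+\int_0^\infty\big[X(\lambda+B)^{-1}-(\lambda+A)^{-1}X\big]\,d\mu(\lambda).
\end{equation*}

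The key algebraic observation is the resolvent commutator identity
\begin{equation*}
X(\lambda+B)^{-1}-(\lambda+A)^{-1}X=(\lambda+A)^{-1}(AX-XB)(\lambda+B)^{-1},
\end{equation*}
which follows by expanding $(\lambda+A)X-X(\lambda+B)=AX-XB$. Inserting this, applying the norm inequality $|||UYV|||\leq\|U\|\,|||Y|||\,\|V\|$ from (IV.40) of \cite{bha}, and pulling $|||AX-XB|||$ outside, I would be left with controlling $\int_0^\infty\|(\lambda+A)^{-1}\|\,\|(\lambda+B)^{-1}\|\,d\mu(\lambda)$. Writing $a_0,b_0$ for the smallest eigenvalues of $A,B$ one has $\|(\lambda+A)^{-1}\|=(\lambda+a_0)^{-1}$, and assuming without loss of generality $a_0\leq b_0$ gives the pointwise bound $(\lambda+a_0)^{-1}(\lambda+b_0)^{-1}\leq(\lambda+a_0)^{-2}$; integrating and recalling from \eqref{2.4} that $\|f'(A)\|=\beta+\int_0^\infty(\lambda+a_0)^{-2}\,d\mu(\lambda)$, which under $a_0\leq b_0$ equals $\max\{\|f'(A)\|,\|f'(B)\|\}$, yields the single-function bound. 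This mirrors exactly the comparison of $a_0$ and $b_0$ used in the proof of Theorem \ref{t.1}, and it is the step I expect to require the most care, since the maximum must be extracted from under the integral sign, which is only legitimate after fixing the ordering of $a_0$ and $b_0$.

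With the single-function estimate in hand, I would write the Leibniz-type identity
\begin{equation*}
f(A)g(A)X-Xf(B)g(B)=f(A)\big[g(A)X-Xg(B)\big]+\big[f(A)X-Xf(B)\big]g(B),
\end{equation*}
apply the triangle inequality together with $|||YZ|||\leq\|Y\|\,|||Z|||$ and $|||YZ|||\leq|||Y|||\,\|Z\|$, and then invoke the single-function bound for both $f$ and $g$. This produces the estimate with the factors $\|f(A)\|$ and $\|g(B)\|$; finally, replacing $\|f(A)\|\leq\max\{\|f(A)\|,\|f(B)\|\}$ and $\|g(B)\|\leq\max\{\|g(A)\|,\|g(B)\|\}$ gives the symmetric right-hand side of \eqref{4.2}. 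The only facts used beyond the integral representation are the norm inequality (IV.40) and the infinite differentiability of operator monotone functions, both already available in the excerpt.
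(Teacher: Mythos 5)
Your proof is correct, and it takes a genuinely different route from the paper's. The paper argues matricially: it reduces to the case $A=B$ with $A$ diagonal via the block substitution $A\mapsto\left[\begin{smallmatrix}A&0\\0&B\end{smallmatrix}\right]$, $X\mapsto\left[\begin{smallmatrix}0&X\\0&0\end{smallmatrix}\right]$, writes $f(A)g(A)X-Xf(A)g(A)=Y\circ(AX-XA)$ with $Y=(fg)^{[1]}(A)$ the divided-difference (Loewner) matrix, splits $Y=f^{[1]}(A)g(A)+f(A)g^{[1]}(A)$ --- the discrete counterpart of your Leibniz identity --- and then invokes positivity of Loewner matrices of operator monotone functions (\cite[Theorem V.3.4]{bha}) together with the fact that Schur multiplication by a positive semidefinite matrix $P$ costs only the factor $\max_i p_{ii}$ in any unitarily invariant norm. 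You instead first prove the single-function bound $|||f(A)X-Xf(B)|||\leq\max\{\|f'(A)\|,\|f'(B)\|\}\,|||AX-XB|||$ from the integral representation \eqref{2.2} and the resolvent identity, and then assemble the product at the operator level; note that this single-function bound is precisely Theorem \ref{t4}, which the paper derives as a corollary of Theorem \ref{t3} by taking $g=1$, so you have inverted the paper's logical order (legitimately, since your proof of it is independent). Your key steps check out: the resolvent identity follows from $(\lambda+A)X-X(\lambda+B)=AX-XB$, the scalar terms of \eqref{2.2} do cancel under the commutator, and the pointwise bound $(\lambda+a_0)^{-1}(\lambda+b_0)^{-1}\leq(\lambda+\min\{a_0,b_0\})^{-2}$ extracts the maximum exactly as in the eigenvalue comparison used in Theorem \ref{t.1}. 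What your route buys: it is dimension-free (it proves the inequality on $B(H)$ whenever the spectra of $A,B$ are bounded away from $0$, which is automatic in $M_n(\mathbb{C})$), it bypasses Schur-multiplier machinery, and before symmetrizing it yields the slightly sharper asymmetric bound $\max\{\|f'(A)\|,\|f'(B)\|\}\|g(B)\|+\|f(A)\|\max\{\|g'(A)\|,\|g'(B)\|\}$; a Cauchy--Schwarz estimate on your integral would even replace the maxima of derivative norms by geometric means. What the paper's route buys: the block trick disposes of the case $A\neq B$ with no analysis at all, and the Loewner-matrix calculus is the standard framework behind the companion inequalities \eqref{4.0} and \eqref{4.1}.
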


\begin{proof}
It suffices to prove the required inequality in the special case which
$A = B$ and $A$ is diagonal. Then the general case follows by replacing $A$ with
$\begin{bmatrix}
  A & 0 \\
  0 & B \\
\end{bmatrix}$ and $X$ with
$\begin{bmatrix}
  0 & X \\
  0 & 0 \\
\end{bmatrix}.$
Therefore let $A=diag(\lambda_i)>0$. Then $f(A)g(A)X-Xf(A)g(A)=Y\circ (AX-XA)$ where $Y=(fg)^{[1]}(A)$, i.e.,

\begin{equation*}
y_{ij}= \\
\begin{cases}
\frac{f(\lambda_i)g(\lambda_i)-f(\lambda_j)g(\lambda_j)}{\lambda_i-\lambda_j}, &\lambda_i\neq \lambda_j\\
f'(\lambda_i)g(\lambda_i)+f(\lambda_i)g'(\lambda_i),&\lambda_i=\lambda_j.
\end{cases}
\end{equation*}
Or, $Y=Pg(A)+f(A)Q$, where $P=f^{[1]}(A)$, $Q=g^{[1]}(A)$ and
\begin{equation*}
p_{ij}= \\
\begin{cases}
\frac{f(\lambda_i)-f(\lambda_j)}{\lambda_i-\lambda_j}, &\lambda_i\neq \lambda_j\\
f'(\lambda_i),&\lambda_i=\lambda_j,
\end{cases}
\end{equation*}
and
\begin{equation*}
q_{ij}= \\
\begin{cases}
\frac{g(\lambda_i)-g(\lambda_j)}{\lambda_i-\lambda_j}, &\lambda_i\neq \lambda_j\\
g'(\lambda_i),&\lambda_i=\lambda_j.
\end{cases}
\end{equation*}
By \cite[Theorem V.3.4]{bha}, $f^{[1]}(A)\geq0$ and $g^{[1]}(A)\geq0$.
Consequently
\begin{align*}
|||f(A)g(A)X&-Xf(A)g(A)|||=|||Y\circ (AX-XA)|||\\
&=|||(Pg(A+f(A)Q)\circ (AX-XA)|||\\
&\leq |||Pg(A)\circ (AX-XA)|||+|||f(A)Q\circ (AX-XA)|||\\
&\leq\max p_{ii}g(\lambda_i)~|||AX-XA|||+\max f(\lambda_i)q_{ii}~|||AX-XA|||\\
&\leq\big[\|f'(A)\|\|g(A)\|+\|f(A)\|\|g'(A)\|\big]~|||AX-XA|||.
\end{align*}
\end{proof}
From \eqref{4.2} for $g(t)=1$, we get the following result.

\begin{theorem}\cite[Theorem 1]{gha2},\label{t4}
Let $A , B, X\in M_n(\mathbb{C})$ such that $A$ and $B$ be positive definite and $f$ be an operator monotone function on $(0,\infty)$. Then
\begin{equation}\label{4.3}
|||f(A)X-Xf(B)|||\leq \max\{||f'(A)||, ||f'(B)||\} |||AX-XB|||.
\end{equation}
\end{theorem}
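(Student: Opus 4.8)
The plan is to obtain \eqref{4.3} as the special case $g\equiv 1$ of Theorem~\ref{t3}, so that the whole argument reduces to checking that this choice of $g$ is admissible and then simplifying the right-hand side of \eqref{4.2}. First I would verify that the constant function $g(t)=1$ is operator monotone on $(0,\infty)$: if $A\leq B$ are self adjoint with spectra in $(0,\infty)$, then $g(A)=1_H=g(B)$, so $g(A)\leq g(B)$ holds trivially. Hence $g$ satisfies the hypotheses of Theorem~\ref{t3}, and every ingredient used in its proof remains valid; in particular the Loewner matrix $g^{[1]}(A)$ is the zero matrix, which is certainly positive semi-definite, so the appeal to \cite[Theorem V.3.4]{bha} goes through unchanged.

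Next I would compute each factor appearing in \eqref{4.2} for this $g$. Since $g$ is constant, $g(A)=g(B)=1_H$, whence $\max\{\|g(A)\|,\|g(B)\|\}=\|1_H\|=1$, while $g'(t)=0$ gives $g'(A)=g'(B)=0$ and therefore $\max\{\|g'(A)\|,\|g'(B)\|\}=0$. Moreover $f(A)g(A)=f(A)$ and $f(B)g(B)=f(B)$, so the left-hand side $|||f(A)g(A)X-Xf(B)g(B)|||$ collapses to $|||f(A)X-Xf(B)|||$.

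Finally, substituting these values into \eqref{4.2}, the second summand inside the bracket vanishes and the first is multiplied by $1$, leaving exactly
\[
|||f(A)X-Xf(B)|||\leq \max\{\|f'(A)\|,\|f'(B)\|\}\,|||AX-XB|||,
\]
which is \eqref{4.3}. There is essentially no obstacle here: the only point that warrants a moment's care is confirming that the degenerate choice $g\equiv 1$ is genuinely operator monotone, so that Theorem~\ref{t3} legitimately applies; once that is granted, the result is pure substitution. Alternatively, one could reprove it directly by the same scheme as Theorem~\ref{t3} — reduce to the case $A=B$ diagonal via the $2\times 2$ block trick, write $f(A)X-Xf(A)=f^{[1]}(A)\circ(AX-XA)$, and bound the resulting Schur multiplier using $f^{[1]}(A)\geq 0$ together with \cite[(IV.40)]{bha} — but the substitution route is considerably shorter and is the one I would record.
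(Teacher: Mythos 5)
Your proposal is correct and matches the paper exactly: the paper states Theorem~\ref{t4} precisely as the specialization of inequality \eqref{4.2} to $g(t)=1$, and your verification that the constant function is operator monotone and that the second bracketed summand vanishes fills in the (trivial) details the paper leaves implicit. No gap; the substitution route you record is the paper's own.
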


\begin{example}
(i) For the function $f(t)=t^r,~0<r\leq1$,
\begin{align*}
\left|\left|\left|A^rX-XB^r\right|\right|\right|&\leq r\max\{\|A^{r-1}\|, \|B^{r-1}\|\}|||AX-XB|||\\
&=r\left(\max\{\|A^{-1}\|, \|B^{-1}\|\}\right)^{1-r}|||AX-XB|||.
\end{align*}
(ii) For the function $f(t)=\log t$ on $(0,\infty)$,
\begin{equation*}
\left|\left|\left|\log(A)X-X\log(B)\right|\right|\right|\leq
\left(\max\{\|A^{-1}\|, \|B^{-1}\|\}\right)|||AX-XB|||.
\end{equation*}
(iii) For the functions $f(t)=t^r,~0<r\leq1$ and $g(t)=\log t$ on $(0,\infty)$. Inequality \eqref{4.2} implies that
\begin{align*}
|||A^r\log(A)X&-XB^r\log(B)||| \\
&\leq\Big[r\left(\max\{\|A^{-1}\|, \|B^{-1}\|\}\right)^{1-r}\times \log\left(\max\{\|A\|, \|B\|\}\right)\\
&+\left(\max\{\|A\|, \|B\|\}\right)^{r}\times\max\{\|A^{-1}\|, \|B^{-1}\|\}\Big]|||AX-XB|||.
\end{align*}

\end{example}

\begin{remark}
Let $\alpha\geq1$ and $0\leq\nu\leq1$. From inequality \eqref{4.2} for $A^\alpha, B^\alpha$ and $f(t)=t^\frac{1}{\alpha}$, we get
\begin{equation}\label{4.2.0}
|||AX-XB|||\leq \frac{1}{\alpha}\max\{||A^{1-\alpha}||, ||B^{1-\alpha}||\} |||A^\alpha X-XB^\alpha |||.
\end{equation}
On combining \eqref{4.0.0}, and \eqref{4.2.0}, we obtain \eqref{4.1}.
\end{remark}

\section{\bf Applications}

As an important application of the results in this paper, we find
bounds for $\|f(B)-f(A)\|$ in terms of $\|B-A\|$, which is one of
the central problems in perturbation theory.

The following companions of the inequalities $(X,43)-(X,46)$ in \cite{bha} for operator monotone functions holds.

\begin{corollary}\label{c1}
\begin{enumerate}\item[(i)] Let $f $ be a real function on $(0,\infty)$ and
$f\in C^2((0,\infty))\cap\mathcal{D}^{(1)}$, then for every positive definite operators $A,B$
\begin{equation*}
\left\|f(B)-f(A)\right\|\leq \\
\begin{cases}
\frac{1}{2}\left(\|f'(A)\|+\|f'(B)\|\right)\|B-A\|, &\text{$\|f'(\cdot)\|$ is convex}\\
\max\{\|f'(A)\|,\|f'(B)\|\}\|B-A\|,&\text{$\|f'(\cdot)\|$ quasi-convex}\\
\frac{1}{s+1}(\|f'(A)\|+\|f'(B)\|)~\|B-A\|, &\text{$\|f'(\cdot)\|$ is $s$-convex}
\end{cases}
\end{equation*}
\item[(ii)] Let $f$ be an operator monotone function on $(0,\infty)$. Then, for every unitarily invariant norm $|||.|||$ and
every positive definite operators $A,B$
\begin{equation}\label{3.1}
\left|\left|\left|f(B)-f(A)\right|\right|\right|\leq
\max\{\|f'(A)\|,\|f'(B)\|\}\left|\left|\left|B-A\right|\right|\right|.
\end{equation}
\end{enumerate}
\end{corollary}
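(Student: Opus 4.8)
The plan is to reduce both parts to a single integral identity and then invoke the convexity results already established. First I would apply Lemma~\ref{l1} with $\nu=0$ and with $\nu=1$ and subtract the two resulting identities; this cancels the integral average $\int_0^1 f((1-t)A+tB)\,dt$ and leaves the clean formula
\begin{equation*}
f(B)-f(A)=\int_0^1 Df\left((1-t)A+tB\right)(B-A)\,dt,
\end{equation*}
which is just the fundamental theorem of calculus for the curve $t\mapsto f((1-t)A+tB)$. Since $A,B$ are positive definite and $t\in[0,1]$, the operator $(1-t)A+tB$ stays positive definite along the whole segment, so each tool quoted below is applicable at every point of the path.

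For part (i), I would move the operator norm under the integral sign and use $f\in\mathcal{D}^{(1)}$ to estimate $\|Df(X)(B-A)\|\leq\|Df(X)\|\,\|B-A\|=\|f'(X)\|\,\|B-A\|$ at $X=(1-t)A+tB$, which gives
\begin{equation*}
\|f(B)-f(A)\|\leq\|B-A\|\int_0^1\left\|f'\left((1-t)A+tB\right)\right\|\,dt.
\end{equation*}
The three cases differ only in how this remaining integral is bounded. When $\|f'(\cdot)\|$ is convex I would dominate the integrand by $(1-t)\|f'(A)\|+t\|f'(B)\|$ and use $\int_0^1 t\,dt=\int_0^1(1-t)\,dt=\tfrac12$; when it is quasi-convex by the constant $\max\{\|f'(A)\|,\|f'(B)\|\}$; and when it is $s$-convex by $(1-t)^s\|f'(A)\|+t^s\|f'(B)\|$ together with $\int_0^1 t^s\,dt=\int_0^1(1-t)^s\,dt=\tfrac1{s+1}$. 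Each of these reproduces exactly the corresponding line of the displayed bound.

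For part (ii) I would run the same identity but with a unitarily invariant norm. Passing the norm under the integral and using the definition of $|||Df(X)|||$ gives $|||Df(X)(B-A)|||\leq|||Df(X)|||\,|||B-A|||$, and Theorem~\ref{t2} with $n=1$ upgrades this to $|||Df(X)|||\leq\|f'(X)\|$, whence
\begin{equation*}
|||f(B)-f(A)|||\leq|||B-A|||\int_0^1\left\|f'\left((1-t)A+tB\right)\right\|\,dt.
\end{equation*}
Because $f$ is operator monotone, Theorem~\ref{t.1} (with $n=1$) guarantees that $\|f'(\cdot)\|$ is quasi-convex, so the integrand is dominated by the constant $\max\{\|f'(A)\|,\|f'(B)\|\}$ and the integral is at most that constant, yielding \eqref{3.1}.

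The steps are essentially routine, and I do not anticipate a substantive obstacle. The only points deserving care are the justification of the integral identity together with the interchange of norm and integral (a standard estimate $\|\int g\|\leq\int\|g\|$ for operator-valued integrands), and the verification that the convexity-type hypotheses together with Theorem~\ref{t.1} and Theorem~\ref{t2} are legitimately invoked on the positive-definite segment $(1-t)A+tB$. Keeping the three constants in part (i) correctly aligned with the corresponding integral evaluations is the most error-prone bookkeeping, but nothing conceptually hard remains once the identity above is in hand.
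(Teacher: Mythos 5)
Your proof is correct, but it takes a genuinely more direct route than the paper's. The paper treats this corollary as an application of its Hermite--Hadamard machinery: it specializes the previously established bounds --- inequality \eqref{2.6} for the convex case, \eqref{2.7} for quasi-convex, \eqref{2.8} for $s$-convex, and \eqref{2.9} for part (ii) --- at the endpoints $\nu=1$ and $\nu=0$, obtaining separate estimates for $\left\|f(A)-\int_0^1 f\left((1-t)A+tB\right)dt\right\|$ and $\left\|f(B)-\int_0^1 f\left((1-t)A+tB\right)dt\right\|$, and then joins them by the triangle inequality through the integral mean. You instead subtract the two instances of Lemma~\ref{l1} to cancel the integral mean, arriving at the identity $f(B)-f(A)=\int_0^1 Df\left((1-t)A+tB\right)(B-A)\,dt$, and estimate that single integral once. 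The two routes are numerically equivalent: since $|t|+|t-1|=1$ on $[0,1]$, the paper's two endpoint kernels sum exactly to your constant kernel, so every constant ($\tfrac{1}{2}$, the maximum, $\tfrac{1}{s+1}$) comes out the same. What your version buys is self-containedness and transparency: it needs only the fundamental-theorem identity, the $\mathcal{D}^{(1)}$ property (resp.\ Theorem~\ref{t2} with $n=1$) and the convexity hypothesis (resp.\ Theorem~\ref{t.1}), and in particular it proves the quasi-convex and $s$-convex cases directly rather than routing through the inequalities \eqref{2.7} and \eqref{2.8}, which the paper's Remark states without detailed proof. What the paper's version buys is that the intermediate endpoint inequalities --- the distances of $f(A)$ and $f(B)$ to the integral mean, which the text flags as interesting in their own right --- are recorded along the way, whereas your cancellation discards them.
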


\begin{proof}
(i) Let $\|f'(\cdot)\|$ be convex function; using \eqref{2.6} for $\nu=1$ and $\nu=0$, we get the following inequalities,

\begin{align*}
\left\|f(A)-\int_0^1f\left((1-t)A+tB\right)dt\right\|\leq\left(\frac{1}{3}\|f'(A)\|+\frac{1}{6}\|f'(B)\|\right)\|B-A\|,
\end{align*}
and
\begin{align*}
\left\|f(B)-\int_0^1f\left((1-t)A+tB\right)dt\right\|\leq\left(\frac{1}{6}\|f'(A)\|+\frac{1}{3}\|f'(B)\|\right)\|B-A\|.
\end{align*}
If $\|f'(\cdot)\|$ is quasi-convex or $s$-convex, then from inequalities in \eqref{2.7} or \eqref{2.8}
for $\nu=1$ and $\nu=0$, we obtain the desired inequalities in part (i).

(ii) Utilizing  \eqref{2.9} for $\nu=1$ and $\nu=0$, we obtain the following inequalities, which are interesting in their own right.

\begin{align*}
\left|\left|\left|f(A)-\int_0^1f\left((1-t)A+tB\right)dt\right|\right|\right|
\leq\frac{1}{2}|||B-A|||\max\left\{\|f'(A)\|,\|f'(B)\|\right\},
\end{align*}
and
\begin{align*}
\left|\left|\left|f(B)-\int_0^1f\left((1-t)A+tB\right)dt\right|\right|\right|
\leq\frac{1}{2}|||B-A|||\max\left\{\|f'(A)\|,\|f'(B)\|\right\}.
\end{align*}
Hence we obtain the desired inequality \eqref{3.1} and the proof is completed.
\end{proof}

From the Corollary \ref{c1}, we may state the following example:

\begin{example}
(i) Let $r\in \mathbb{R}$, we know that the real function $f(t)=t^{r}$ on $(0, \infty)$ is in
$C^2((0. \infty))\cap\mathcal{D}^{(1)}$, if $r\notin (1,2)$; see \cite{bha4}.
Since $\|f'(\cdot)\|$ is convex for $r\notin (1,2)$, and is $(r-1)$-convex for
$1< r < 2$, we obtain the following inequalities
\begin{equation*}
\left\|B^r-A^r\right\|\leq \\
\begin{cases}
\left(\|B^{r-1}\|+\|A^{r-1}\|\right)\|B-A\| & r\in (1,2)\\
\frac{r}{2}(\|B^{r-1}\|+\|A^{r-1}\|)~\|B-A\| & r\notin (1,2).
\end{cases}
\end{equation*}

(ii) The function $f(t)=t^r,~0\leq r\leq1$, is operator monotone, therefore
\begin{equation*}
\left|\left|\left|B^r-A^r\right|\right|\right|\leq
r\left(\max\{\|A^{-1}\|, \|B^{-1}\|\}\right)^{1-r}|||B-A|||.
\end{equation*}

(iii) The function $f(t)=\log t$ on $(0,\infty)$, is operator monotone, so
\begin{equation*}
\left|\left|\left|\log(B)-\log(A)\right|\right|\right|\leq
\left(\max\{\|A^{-1}\|, \|B^{-1}\|\}\right)|||B-A|||.
\end{equation*}
(iv) The functions $f(t)=t^r,~0\leq r\leq1$ and $g(t)=\log t$ on $(0,\infty)$, are operator monotone.
From \eqref{2.60}, we obtain
\begin{align*}
&|||B^r\log B-A^r\log A|||\\
&\leq |||B-A|||\Big[r\left(\max\{\|A^{-1}\|, \|B^{-1}\|\}\right)^{1-r}\times\log(\max\{\|A\|, \|B\|\})\\
&+(\max\{\|A\|, \|B\|\})^r\times\max\{\|A^{-1}\|, \|B^{-1}\|\}\Big].
\end{align*}

\end{example}

\begin{remark}
Let $f$ be an operator monotone function on $(0,\infty)$ and let $A, B$ be two positive definite operators such that $ A\geq a1_H$
and $B \geq a1_H$ for the positive number $a$. By \cite[Theorem X.3.8]{bha}, for every unitarily invariant norm the following inequality holds
\begin{equation}\label{3.2}
\left|\left|\left|f(B)-f(A)\right|\right|\right|\leq
f'(a)\left|\left|\left|B-A\right|\right|\right|.
\end{equation}

From inequality \eqref{2.4}, we have

\begin{align*}
 f'(A)&=\beta 1_H+\int_0^\infty\left(\lambda+A\right)^{-2}d\mu(\lambda)\\
 &\leq \beta 1_H+\left[\int_0^\infty\left(\lambda+a\right)^{-2}d\mu(\lambda)\right]I=f'(a)1_H.
 \end{align*}

This implies that $\max\{\|f'(A)\|,\|f'(B)\|\}\leq f'(a)$; therefore inequality \eqref{3.1} is a refinement of \eqref{3.2}.
This refinement can be sharp because for the function $f(t)=t^r,~0<r<1$ and $A=B=b1_H$ with $b>a$, we have
$$\max\{\|f'(A)\|, \|f'(B)\|\}=r\max\{\|A^{r-1}\|, \|B^{r-1}\|\}=rb^{r-1}<ra^{r-1}=f'(a).$$
\end{remark}

 In the following Theorems, we give two error estimates of the Simpson's rules for operator monotone functions as integrand.

\begin{theorem}\label{t.5}
Let $f$ be an operator monotone function on $(0,\infty)$. Then, for every unitarily invariant norm $|||.|||$ and every positive definite operators $A,B$,
\begin{multline}\label{3.3}
\left|\left|\left|\frac{f(A)+4f\left(\frac{A+B}{2}\right)+f(B)}{6}-\int_0^1f\left((1-t)A+tB\right)dt\right|\right|\right|\\
\leq \frac{5}{32}\left|\left|\left|B-A\right|\right|\right|\max\left\{\|f'(A)\|,\|f'(B)\|\right\}.
\end{multline}

\end{theorem}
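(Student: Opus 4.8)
The plan is to reduce the estimate to the one-sided bound \eqref{2.9} by splitting the integration interval at its midpoint. Writing $M=\frac{A+B}{2}$, I would decompose $\int_0^1 f((1-t)A+tB)\,dt=\int_0^{1/2}+\int_{1/2}^1$ and rescale each half: the affine substitution $t=\frac{s}{2}$ identifies $(1-t)A+tB$ with $(1-s)A+sM$ for $s\in[0,1]$, so that $\int_0^{1/2}f((1-t)A+tB)\,dt=\frac12\int_0^1 f((1-s)A+sM)\,ds$, and symmetrically $t=\frac{1+s}{2}$ gives $\int_{1/2}^1 f((1-t)A+tB)\,dt=\frac12\int_0^1 f((1-s)M+sB)\,ds$.

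Next I would apply \eqref{2.9} to the pair $(A,M)$ with $\nu=\frac13$ and to the pair $(M,B)$ with $\nu=\frac23$. These two values are chosen so that, after scaling each half by $\frac12$, the convex combinations assemble into the Simpson weights: the coefficient of $f(A)$ becomes $\frac12\cdot\frac13=\frac16$, the coefficient of $f(B)$ becomes $\frac12\cdot\frac13=\frac16$, and the two midpoint contributions add to $\frac12\big((1-\tfrac13)+\tfrac23\big)=\frac23$. Hence the $f(M)$ terms do not cancel but combine to exactly $\frac23 f(M)=\frac46 f(\frac{A+B}{2})$, and summing the two scaled instances of \eqref{2.9} reproduces precisely $\frac{f(A)+4f(M)+f(B)}{6}-\int_0^1 f((1-t)A+tB)\,dt$ inside the norm, by the triangle inequality.

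On the right-hand side, \eqref{2.9} contributes the factor $\nu^2-\nu+\frac12$, which equals $\frac{5}{18}$ at both $\nu=\frac13$ and $\nu=\frac23$, multiplied by $|||M-A|||=|||B-M|||=\frac12|||B-A|||$ and by the extra $\frac12$ coming from the substitution. Each half therefore produces $\frac12\cdot\frac{5}{18}\cdot\frac12=\frac{5}{72}$ times $|||B-A|||$ times the corresponding maximum of $\|f'(\cdot)\|$. Since $\|f'(\cdot)\|$ is quasi-convex by \thmref{t.1}, one has $\|f'(M)\|\le\max\{\|f'(A)\|,\|f'(B)\|\}$, so both occurring maxima are dominated by $\max\{\|f'(A)\|,\|f'(B)\|\}$; adding the two halves yields $\frac{5}{72}+\frac{5}{72}=\frac{5}{36}$ times $|||B-A|||\,\max\{\|f'(A)\|,\|f'(B)\|\}$.

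The only real difficulty is the bookkeeping: one must keep track of the three separate factors of $\frac12$ (from the change of variable, from $|||M-A|||$, and the weight used to recombine the halves) and confirm that $\nu=\frac13,\frac23$ are exactly the values forcing the midpoint coefficients to sum to $\frac23$. I note that the computation naturally delivers the constant $\frac{5}{36}$, and since $\frac{5}{36}<\frac{5}{32}$ the stated inequality \eqref{3.3} follows at once, in fact with a marginally sharper constant. As an alternative that avoids the splitting, one may run a single Montgomery-type identity with the Simpson kernel $s(t)=t-\frac16$ on $[0,\frac12]$ and $s(t)=t-\frac56$ on $[\frac12,1]$, bound $|||Df((1-t)A+tB)(B-A)|||\le\|f'((1-t)A+tB)\|\,|||B-A|||$ using \thmref{t2}, and then apply quasi-convexity; the same value $\int_0^1|s(t)|\,dt=\frac{5}{36}$ emerges.
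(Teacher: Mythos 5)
Your proposal is correct and takes essentially the same route as the paper: the same midpoint decomposition of the integral, the same application of \eqref{2.9} with $\nu=\tfrac13$ to the pair $\left(A,\tfrac{A+B}{2}\right)$ and $\nu=\tfrac23$ to $\left(\tfrac{A+B}{2},B\right)$, and the same use of quasi-convexity (Theorem \ref{t.1}) to absorb $\left\|f'\left(\tfrac{A+B}{2}\right)\right\|$. Your more careful bookkeeping in fact yields the sharper constant $\tfrac{5}{36}$, since $\left(\nu^2-\nu+\tfrac12\right)\big|_{\nu=1/3}=\tfrac{5}{18}$ times $\left|\left|\left|\tfrac{A+B}{2}-A\right|\right|\right|=\tfrac12|||B-A|||$ gives $\tfrac{5}{36}$ rather than the $\tfrac{5}{32}$ the paper asserts in \eqref{3.4} and \eqref{3.5} (apparently an arithmetic slip there), and since $\tfrac{5}{36}<\tfrac{5}{32}$ the stated inequality \eqref{3.3} follows a fortiori.
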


\begin{proof}
It is easy to show that
\begin{align*}
\int_0^1 f((1-t)A+tB)dt&=\frac{1}{2}\int_0^1 f\left((1-t)A+t\frac{A+B}{2}\right)dt\\
&+\frac{1}{2}\int_0^1 f\left((1-t)\frac{A+B}{2}+tB\right)dt.
\end{align*}
Regarding to $\left\|f'\left(\frac{A+B}{2}\right)\right\|\leq\max\left\{\|f'(A)\|,\|f'(B)\|\right\}$,
and inequality \eqref{2.9} for $\nu=\frac{1}{3}$, we have
\begin{multline}\label{3.4}
\left|\left|\left|\frac{f(A)+2f\left(\frac{A+B}{2}\right)}{3}-\int_0^1f\left((1-t)A+t\frac{A+B}{2}\right)dt\right|\right|\right|\\
\leq\frac{5}{32}\left|\left|\left|B-A\right|\right|\right|\max\left\{\|f'(A)\|,\left\|f'\left(\frac{A+B}{2}\right)\right\|\right\}\\
\leq \frac{5}{32}\left|\left|\left|B-A\right|\right|\right|\max\left\{\|f'(A)\|,\|f'(B)\|\right\}.
\end{multline}
Again, using inequality \eqref{2.9} for $\nu=\frac{2}{3}$, we have
\begin{multline}\label{3.5}
\left|\left|\left|\frac{2f\left(\frac{A+B}{2}\right)+f(B)}{3}-\int_0^1f\left((1-t)\frac{A+B}{2}+tB\right)dt\right|\right|\right|\\
\leq\frac{5}{32}\left|\left|\left|B-A\right|\right|\right|\max\left\{\left\|f'\left(\frac{A+B}{2}\right)\right\|, \|f'(B)\|\right\}\\
\leq \frac{5}{32}\left|\left|\left|B-A\right|\right|\right|\max\left\{\|f'(A)\|,\|f'(B)\|\right\}.
\end{multline}
From relations \eqref{3.4}, \eqref{3.5}, and the following equation, we obtain the desired inequality \eqref{3.3}.
\begin{align*}
\frac{1}{6}\left(f(A)+4f\left(\frac{A+B}{2}\right)+f(B)\right)&=\frac{1}{2}\left(\frac{f(A)+2f\left(\frac{A+B}{2}\right)}{3}\right)\\
&+\frac{1}{2}\left(\frac{2f\left(\frac{A+B}{2}\right)+f(B)}{3}\right),
\end{align*}
Hence the proof is completed.
\end{proof}

A simple calculation show that
\begin{align*}
&\int_0^1 f((1-t)A+tB)dt=\frac{1}{3}\int_0^1 f\left((1-t)A+t\frac{2A+B}{3}\right)dt\\
&+\frac{1}{3}\int_0^1 f\left((1-t)\frac{2A+B}{3}+t\frac{A+2B}{3}\right)dt+\frac{1}{3}\int_0^1 f\left((1-t)\frac{A+2B}{3}+tB\right)dt,
\end{align*}
and
\begin{multline*}
\frac{1}{8}\left(f(A)+3f\left(\frac{2A+B}{3}\right)+3f\left(\frac{A+2B}{3}\right)+f(B)\right)\\
=\frac{1}{3}\left(\frac{3}{8}f(A)+\frac{5}{8}f\left(\frac{2A+B}{3}\right)\right)
+\frac{1}{3}\left(\frac{1}{2}f\left(\frac{2A+B}{3}\right)+\frac{1}{2}f\left(\frac{A+2B}{3}\right)\right)\\
+\frac{1}{3}\left(\frac{5}{8}f\left(\frac{A+2B}{3}\right)+\frac{3}{8}f\left(B\right)\right).\\
\end{multline*}
Therefore, we deduce the following result.

\begin{theorem}\label{t.6}
Let $f$ be an operator monotone function on $(0,\infty)$. Then, for every unitarily invariant norm $|||.|||$ and every positive definite operators $A,B$,
\begin{multline*}
\left|\left|\left|\frac{f(A)+3f\left(\frac{2A+B}{3}\right)+3f\left(\frac{A+2B}{3}\right)+f(B)}{8}-\int_0^1f\left((1-t)A+tB\right)dt\right|\right|\right|\\
\leq \frac{25}{288}\left|\left|\left|B-A\right|\right|\right|\max\left\{\|f'(A)\|,\|f'(B)\|\right\}.
\end{multline*}

\end{theorem}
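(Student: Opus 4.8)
The plan is to imitate, step for step, the proof of Theorem~\ref{t.5}, using the two identities displayed immediately above the statement: the decomposition of $\int_0^1 f((1-t)A+tB)\,dt$ into three integrals over the consecutive subintervals with endpoints $A,\frac{2A+B}{3},\frac{A+2B}{3},B$, together with the matching splitting of the Simpson $3/8$ quadrature weight. Writing $X$ for the operator inside the norm in the statement and subtracting the two identities term by term exhibits $X$ as a sum of three blocks, each carrying a factor $\frac{1}{3}$:
\begin{align*}
X&=\frac{1}{3}\Big[\tfrac{3}{8}f(A)+\tfrac{5}{8}f\big(\tfrac{2A+B}{3}\big)-\int_0^1 f\big((1-t)A+t\tfrac{2A+B}{3}\big)\,dt\Big]\\
&+\frac{1}{3}\Big[\tfrac{1}{2}f\big(\tfrac{2A+B}{3}\big)+\tfrac{1}{2}f\big(\tfrac{A+2B}{3}\big)-\int_0^1 f\big((1-t)\tfrac{2A+B}{3}+t\tfrac{A+2B}{3}\big)\,dt\Big]\\
&+\frac{1}{3}\Big[\tfrac{5}{8}f\big(\tfrac{A+2B}{3}\big)+\tfrac{3}{8}f(B)-\int_0^1 f\big((1-t)\tfrac{A+2B}{3}+tB\big)\,dt\Big].
\end{align*}
Applying the triangle inequality for $|||\cdot|||$ then reduces matters to bounding each of the three blocks separately.

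Each bracketed expression is exactly of the form controlled by inequality~\eqref{2.9}, the pairs of endpoints being $\big(A,\frac{2A+B}{3}\big)$, $\big(\frac{2A+B}{3},\frac{A+2B}{3}\big)$ and $\big(\frac{A+2B}{3},B\big)$, with the weight at the left endpoint equal to $\nu=\frac{3}{8}$, $\nu=\frac{1}{2}$ and $\nu=\frac{5}{8}$ respectively. So the second step is to apply \eqref{2.9} to each block. The arithmetic that makes this work is that in every block the two endpoints differ by $\frac{1}{3}(B-A)$, so the norm factor there is $|||\tfrac{1}{3}(B-A)|||=\frac{1}{3}|||B-A|||$, while $\nu^2-\nu+\frac{1}{2}$ equals $\frac{17}{64}$ for $\nu=\frac{3}{8}$ and $\nu=\frac{5}{8}$ and equals $\frac{1}{4}$ for $\nu=\frac{1}{2}$.

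Next I would replace the node values of $\|f'(\cdot)\|$ by $M:=\max\{\|f'(A)\|,\|f'(B)\|\}$. Since $f$ is operator monotone, the case $n=1$ of Theorem~\ref{t.1} tells us $\|f'(\cdot)\|$ is quasi-convex; because $\frac{2A+B}{3}=\frac{2}{3}A+\frac{1}{3}B$ and $\frac{A+2B}{3}=\frac{1}{3}A+\frac{2}{3}B$ are convex combinations of $A$ and $B$, this yields $\big\|f'\big(\tfrac{2A+B}{3}\big)\big\|\le M$ and $\big\|f'\big(\tfrac{A+2B}{3}\big)\big\|\le M$, so every maximum produced by \eqref{2.9} is at most $M$. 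Collecting the constants then gives
\begin{equation*}
|||X|||\le\frac{1}{3}\Big(\frac{17}{64}\cdot\frac{1}{3}+\frac{1}{4}\cdot\frac{1}{3}+\frac{17}{64}\cdot\frac{1}{3}\Big)|||B-A|||\,M=\frac{1}{9}\cdot\frac{25}{32}\,|||B-A|||\,M=\frac{25}{288}\,|||B-A|||\,M,
\end{equation*}
which is the asserted bound.

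I do not expect any genuine analytic obstacle: the substantive inputs—the integral identity of Lemma~\ref{l1}, the single-interval estimate \eqref{2.9}, and the quasi-convexity of $\|f'(\cdot)\|$—are already available, and the two telescoping identities are supplied just before the statement. The only point demanding care is the bookkeeping: correctly matching the endpoint weights $\frac{3}{8},\frac{5}{8}$ and the interior weight $\frac{1}{2}$ to the values $\nu=\frac{3}{8},\frac{5}{8},\frac{1}{2}$, verifying that each subinterval contributes the factor $\frac{1}{3}|||B-A|||$, and checking that the three constants combine to precisely $\frac{25}{288}$.
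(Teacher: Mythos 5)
Your proposal is correct and is exactly the argument the paper intends: the paper states the two identities (the three-interval decomposition of the integral and the splitting of the Simpson $3/8$ weights) immediately before Theorem~\ref{t.6} and then deduces the result without further detail, implicitly applying \eqref{2.9} on each subinterval just as in the proof of Theorem~\ref{t.5}. Your bookkeeping — $\nu=\frac{3}{8},\frac{1}{2},\frac{5}{8}$ giving $\frac{17}{64},\frac{1}{4},\frac{17}{64}$, the factor $\frac{1}{3}|||B-A|||$ per block, the quasi-convexity bound on $\big\|f'\big(\tfrac{2A+B}{3}\big)\big\|$ and $\big\|f'\big(\tfrac{A+2B}{3}\big)\big\|$ via Theorem~\ref{t.1}, and the total $\frac{25}{288}$ — checks out.
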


\end{document}